\documentclass[reqno,10pt,centertags]{amsart}
\usepackage{amssymb}
\usepackage{hyperref}
\usepackage{latexsym}
\usepackage{amsmath}
\usepackage[usenames,dvipsnames,svgnames,table]{xcolor}

\allowdisplaybreaks

\newcommand*{\mailto}[1]{\href{mailto:#1}{\nolinkurl{#1}}}


   \def\sH{{\mathfrak H}}

      \def\dC{{\mathbb C}}

      \def\dR{{\mathbb R}}

   \def\cB{{\mathcal B}}

\def\R{\mathbb{R}}

\def\N{\mathbb{N}}

\def\ran{{\text{\rm ran\,}}}

\def\dom{{\text{\rm dom\,}}}



\renewcommand{\dot}{\overset{\textbf{\Large.}}}

\newcommand{\no}{\notag}
\newcommand{\lb}{\label}
\newcommand{\f}{\frac}

\newcommand{\Oh}{O}

\newcommand{\dott}{\,\cdot\,}


\newtheorem{theorem}{Theorem}[section]
\newtheorem*{thm*}{Theorem}

\newtheorem{corollary}[theorem]{Corollary}

\theoremstyle{definition}

\newtheorem{hypothesis}[theorem]{Hypothesis}

\newtheorem{remark}[theorem]{Remark}

\numberwithin{equation}{section}

\begin{document}
\title{Weak Coupling and Spectral Instability for Neumann Laplacians}

\author[J. Behrndt]{Jussi Behrndt}
 \address{Technische Universit\"{a}t Graz\\
 Institut f\"ur Angewandte Mathematik\\
 Steyrergasse 30\\
 8010 Graz, Austria}

\email{behrndt@tugraz.at}
\urladdr{https://www.math.tugraz.at/~behrndt/}

\author[F.\ Gesztesy]{Fritz Gesztesy}
\address{Department of Mathematics,
Baylor University, Sid Richardson Bldg., 1410 S.\,4th Street,
Waco, TX 76706, USA}

\email{Fritz\_Gesztesy@baylor.edu}
\urladdr{https://math.artsandsciences.baylor.edu/person/fritz-gesztesy-phd}

\author[H. de Snoo]{Henk de Snoo}
\address{Bernoulli Institute for Mathematics, Computer Science and Artificial Intelligence,
University of Groningen, P.O. Box 407, 9700 AK Groningen, Netherlands}
\email{\mailto{h.s.v.de.snoo@rug.nl}}
\email{hsvdesnoo@gmail.com}

\date{\today}

\subjclass[2020]{Primary: 35J10, 47A10, 47A75, 47F05, 81Q10; Secondary: 34L05, 35P05, 47A55, 47E05.}
\keywords{Weakly coupled bound state, spectral instability, Birman--Schwinger principle, Neumann Laplacian, Schr\"odinger operator}

\begin{abstract} 
We prove an abstract criterion on spectral instability of nonnegative selfadjoint extensions of a symmetric operator and apply this to self-adjoint Neumann Laplacians on bounded Lipschitz domains, intervals, and graphs. Our results can be viewed as variants of the classical weak coupling phenomenon for Schr\"odinger operators in $L^2(\dR^n)$ for $n=1,2$.
\end{abstract}

\maketitle

\section{Introduction} \lb{s1}
We start by recalling
the classical weak coupling phenomenon for Schr\"odinger operators, which goes back to Simon \cite{Si76}, \cite{Si77}. For this purpose, let $-\Delta$ be the self-adjoint one-dimensional Laplacian in $L^2(\dR)$ defined on $H^2(\dR)$
and assume that the potential $V:\dR\rightarrow\dR$ satisfies $V \in L^1(\dR;(1+x^2)dx)\cap L^2(\dR;dx)$, and $V$ is not zero a.e. For $\alpha\in\dR$ it follows that 
$-\Delta+\alpha V$ is self-adjoint in $L^2(\dR)$ and 
\begin{equation}
\sigma_{ess}(-\Delta+\alpha V)=\sigma_{ess}(-\Delta)=\sigma(-\Delta)=[0,\infty). 
\end{equation} 
It was shown in \cite[Theorem 2.5]{Si76} (see also \cite[Theorem XIII.11]{RS78}) that for any $\alpha<0$
one has 
 \begin{equation}
 \sigma_p(-\Delta+\alpha V)\cap (-\infty,0)\not=\emptyset\quad\text{if and only if}\quad \int_\dR V(x)\,dx\geq 0,
 \end{equation}
 and hence, in particular, if $V\geq 0$, then $\sigma_p(-\Delta+\alpha V)\cap (-\infty,0)\not=\emptyset$ for any $\alpha<0$. 
 The same result holds also for the self-adjoint Laplacian $-\Delta$ in $L^2(\dR^2)$
 under slightly different integrability conditions on the potential $V:\dR^2\rightarrow\dR$, and it is also
 well known that the phenomenon of weakly coupled eigenvalues does not appear in dimensions $n\geq 3$. The works \cite{Si76}, \cite{Si77} by Simon have inspired and influenced a lot of future research; they were followed by Klaus and Simon \cite{KS80}, \cite{KS80a}, and Rauch \cite{Ra80}. A wealth of additional information can be found, for instance, in \cite{BGS77}, \cite{BGRS97}, \cite{FK98}, \cite{GH87}, \cite{Ho85}, \cite{Kl77}--\cite{Kl82}, \cite{La80}, \cite{Me02}, \cite{Pa80}, \cite{Pa82}, \cite[Theorem~XIII.11, p.~336--338]{RS78}. For some other related more recent developments we refer the reader to \cite{B24}, \cite{CM21}, \cite{CS18}, 
 \cite{EKL18},  \cite{EKL24}, \cite{FMV11}, \cite{HHRV23}, \cite{KL14}, \cite{MV23}, and the references cited therein.
 
The main objective of this note is to transfer these ideas from Schr\"odinger operators $- \Delta + \alpha V$ to an abstract setting that replaces 
the Laplacian by a nonnegative self-adjoint extension $A$ of a densely defined closed nonnegative symmetric operator $S$ in a Hilbert space $\sH$ and the potential by an appropriate 
nonnegative self-adjoint perturbation, also denoted by $V$, that is relatively form compact with respect to $A$. 
In our main abstract result Theorem~\ref{fieldsmedalsk} it is shown that under some additional mild assumptions $A$ is spectrally unstable, that is, for any $\alpha<0$
the perturbed self-adjoint operator $A+\alpha V$ has negative discrete eigenvalues. The proof of Theorem~\ref{fieldsmedalsk} is based on the Birman--Schwinger principle, 
see, for instance, \cite{GLMZ05,KK66}. In fact, the essential assumptions to ensure the existence of weakly coupled negative eigenvalues of $A+\alpha V$, $\alpha<0$,
are $\ker(A)\not=\{0\}$ and $\ker(A)\not\subseteq \ker(V)$; roughly speaking the first assumption $\ker(A)\not=\{0\}$ ensures 
that the resolvent of $A$ has a singularity at $0$ and the second assumption $\ker(A)\not\subseteq \ker(V)$ is needed to preserve this singularity for the sandwiched resolvent 
$V^{1/2}(A-\mu I_{\sH})^{-1}V^{1/2}$ when $\mu<0$ tends to $0$. We note that for the special case where  $0$ is an isolated eigenvalue of finite multiplicity of $A$, our result would also follow from asymptotic perturbation theory.

Our general result applies directly to the Neumann Laplacian $-\Delta_N$ on a bounded interval $(a,b)$ or on a bounded Lipschitz domain $\Omega\subset\dR^n$, $n\geq 2$, since in that case 
$\ker(-\Delta_N)$ is spanned by the constant function and if $V\geq 0$ is a multiplication operator which is relatively compact perturbation with respect to $-\Delta_N$, then certainly 
$\ker(-\Delta_N)\not\subseteq\ker(V)$ as otherwise $V=0$ a.e. on $(a,b)$  or $\Omega$, respectively. As a consequence of our abstract result, Theorem~\ref{fieldsmedalsk},  
we conclude in Corollary~\ref{cor1} and Corollary~\ref{cor2} that for any $\alpha<0$ and 
nonnegative function $V$, $V\not=0$, such that
$V\in L^p$ with $p\geq 2$ if $n=1,2$ and $p>2n/3$ if $n\geq 3$, there exist weakly coupled negative bound states for the perturbed Neumann Laplacian $-\Delta_N+\alpha V$, that is,
\begin{equation}
 \sigma(-\Delta_N+\alpha V)\cap(-\infty,0)\not=\emptyset\quad\text{for any}\,\,\alpha<0. 
\end{equation}
We note that weakly coupled bound states for Schr\"odinger operators in $\dR^n$ exist only for $n=1,2$, whereas weakly coupled bound states for the perturbed Neumann Laplacian
exist in \textit{any} space dimension. We mention that our abstract result also applies to other self-adjoint nonnegative realizations $A$ of the Laplacian on bounded domains 
with the property $\ker(A)\not=\{0\}$ (cf.\ Remark~\ref{saythis}). The observations for the case of a bounded interval extend naturally to finite compact graphs, where the Neumann Laplacian
corresponds to Kirchhoff or standard boundary conditions; see, Corollary~\ref{graphi}. Furthermore, in Corollary~\ref{pncor} we consider a Sturm--Liouville operator with Neumann boundary conditions in $L^2((0,\infty))$ with $0$ as embedded eigenvalue at the bottom of the essential spectrum.

Finally, a few remarks about the notation employed: Given a separable complex Hilbert space $\sH$, $(\dott,\dott)_{\sH}$ denotes the scalar product in $\sH$ (linear in the second factor), $\| \,\cdot\,\|_{\sH}$ the norm in $\sH$, and $I_{\sH}$ represents the identity operator on $\sH$. The domain and range of a linear operator $T$ in $\sH$ are abbreviated by $\dom(T)$ and $\ran(T)$. 
The kernel (null space) of $T$ is denoted by
$\ker(T)$. The spectrum, point spectrum (i.e., the set of eigenvalues), essential spectrum, and resolvent set of a self-adjoint operator in $\sH$ will be abbreviated by 
$\sigma(\,\cdot\,)$, $\sigma_{p}(\,\cdot\,)$, 
$\sigma_{ess}(\,\cdot\,)$, and $\rho(\,\cdot\,)$, respectively. 
The space of compact linear operators in $\sH$ is denoted by $\cB_\infty(\sH)$. 
For $\Omega \subseteq \R^n$, $n \in \N$, we will abbreviate $L^2(\Omega; d^nx)$ for simplicity by $L^2(\Omega)$, and $I_{L^2(\Omega)}$ for convenience by $I$.

\section{Spectral instability of nonnegative self-adjoint extensions} \lb{s2} 

Throughout this section suppose that $S$ is a densely defined closed symmetric operator in a Hilbert space $\sH$
and assume that $S$ is semibounded from below with the lower bound $\kappa\geq 0$, that is,
\begin{equation}
 (Sf,f)_{\sH}\geq \kappa (f,f)_{\sH},\quad f\in\dom (S).
\end{equation}

\begin{hypothesis} \lb{h1}
Let $A$ be a nonnegative self-adjoint extension of $S$ in $\sH$ such that 
$\ker A\not=\{0\}$ and let $V\geq 0$ be a self-adjoint operator in $\sH$ which is relatively compact with respect to $A$, that is, 
\begin{equation}
\dom (A) \subseteq \dom (V)\quad\text{and}\quad V(A+I_{\sH})^{-1}\in\cB_\infty(\sH).
\end{equation} 
\end{hypothesis}

One notes that the lower bound of $A$ is $\kappa \geq 0$ and recalls that the Friedrichs extension $A_F$ of $S$ has the same lower bound as $S$. In the case of differential operators (see the next section) 
the reader may think of $A$ in Hypothesis~\ref{h1} as the self-adjoint Laplacian with Neumann boundary conditions. Another typical example for a self-adjoint extension of $S$ satisfying Hypothesis~\ref{h1} is the 
Krein--von Neumann extension $A_K$, the smallest nonnegative extension of $S$, which in the case $\kappa>0$ is defined by
\begin{equation}
 A_K=S^*\upharpoonright \dom (A_K),\quad \dom (A_K)=\dom (S)\dot +\ker (S^*)  
\end{equation}
(see, e.g., \cite[Sect.~5.4]{BHS20}, \cite{FGKLNS21} and the references cited therein). We also note that the 
self-adjoint extension theory point of view is not strictly necessary for the following arguments and Theorem~\ref{fieldsmedalsk} below, however we find it useful to compare 
$A$ in Hypothesis~\ref{h1} with the extremal nonnegative self-adjoint extensions $A_F$ and $A_K$. We will return to this topic elsewhere.

Our goal is to show that the lower bound $0$ for $A$ in Hypothesis~\ref{h1}
is not stable
under arbitrary small negative perturbations $\alpha V$. The relative compactness assumption in Hypothesis~\ref{h1} ensures that the operators $A+\alpha V$, $\alpha\in\dR$, are self-adjoint and that
\begin{equation} 
\sigma_{ess}(A+\alpha V)=\sigma_{ess}(A),
\end{equation}  
see, for instance, \cite[Theorem XIII.14 and Corollary 2]{RS78}.
Furthermore, 
\begin{itemize}
 \item [$(i)$] If $\alpha\geq 0$, then $A+\alpha V\geq 0$ and, in particular, 
$\sigma(A+\alpha V)\cap(-\infty,0)=\emptyset$.\smallskip 
 \item [$(ii)$] If $\alpha< 0$, then $\sigma(A+\alpha V)\cap(-\infty,0)$ is either empty or consists of 
discrete eigenvalues.
\end{itemize}

From Hypothesis \ref{h1} one obtains 
$V(A-z I_{\sH})^{-1}\in\cB_\infty(\sH)$, $z \in \rho(A)$,
by using the resolvent identity. We also note that 
\begin{equation} 
V^{1/2}(A+ I_{\sH})^{-1/2}\in\cB_\infty(\sH) 
\end{equation}
by
\cite[Theorem 3.5\,$(i)$]{GMMN08}. Then one has $V^{1/2}(A+ I_{\sH})^{-1}\in\cB_\infty(\sH)$, 
\begin{equation}\label{twee}
V^{1/2}(A-z I_{\sH})^{-1}\in\cB_\infty(\sH),\quad\text{and}\quad
V^{1/2} (A-z I_{\sH})^{-1/2}\in\cB_\infty(\sH), \quad z \in \rho(A).
\end{equation}
It follows  that $(A-z I_{\sH})^{-1}V^{1/2}$ and $(A-z I_{\sH})^{-1/2}V^{1/2}$, $z\in\rho(A)$, are densely defined bounded operators, whose closures coincide with the adjoints of the operators in 
\eqref{twee} for $\bar z\in\rho(A)$, and hence also belong to $\cB_\infty(\sH)$.
Therefore, the Birman-Schwinger family $K(z)$, defined by 
\begin{equation}\label{birmi}
K(z):=\overline{V^{1/2}(A - z I_{\sH})^{-1}V^{1/2}},\quad z\in\rho(A),
\end{equation}
satisfies
\begin{equation}\label{birmi2}
 K(z)=V^{1/2}(A - z I_{\sH})^{-1/2}\overline{(A - z I_{\sH})^{-1/2}V^{1/2}} \in\cB_\infty(\sH),\quad z\in\rho(A).
 \end{equation}
Thus, if $z\in\rho(A)$ and
$\alpha^{-1}$ is not an eigenvalue of the compact operator $K(z)$, then $K(z)+\alpha^{-1} I_{\sH}$ is boundedly invertible and one verifies in the same way as in 
\cite[Proof of Theorem 2.3]{GLMZ05} that in the present case $z\in\rho(A+\alpha V)$ and the resolvent formula
\begin{align}\label{resi}
(A+\alpha V - z I_{\sH})^{-1} & =(A - z I_{\sH})^{-1}      \\
& \quad -\overline{(A - z I_{\sH})^{-1}V^{1/2}}\bigl[K(z)+\alpha^{-1} I_{\sH}\bigr]^{-1}V^{1/2} (A - z I_{\sH})^{-1},    \no \\
& \hspace*{5.5cm} z \in \rho(A+\alpha V) \cap \rho(A),    \no 
\end{align}
holds.

The next theorem is our main abstract result; it provides a sufficient condition for spectral instability of the self-adjoint operator $A$ in Hypothesis~\ref{h1}.

\begin{theorem}\label{fieldsmedalsk}
Let $A$ and $V$ be as in Hypothesis~\ref{h1} and
assume, in addition, that $\ker(A)\not\subseteq \ker(V)$. Then 
\begin{equation} 
\sigma(A+\alpha V)\cap(-\infty,0)\not=\emptyset \, \text{ for {\bf any} $\alpha<0$.} 
\end{equation} 
\end{theorem}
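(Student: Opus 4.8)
The plan is to invoke the Birman--Schwinger principle encoded in the resolvent formula \eqref{resi}. Since $A\geq 0$, every $\mu<0$ lies in $\rho(A)$, and for such $\mu$ the operator $(A-\mu I_{\sH})^{-1}=(A+|\mu|I_{\sH})^{-1}$ is nonnegative, so $K(\mu)=\overline{V^{1/2}(A-\mu I_{\sH})^{-1}V^{1/2}}$ is a \emph{nonnegative} compact operator. By \eqref{resi}, together with its converse established as in \cite[Proof of Theorem 2.3]{GLMZ05}, a number $\mu<0$ belongs to $\sigma(A+\alpha V)$ exactly when $K(\mu)+\alpha^{-1}I_{\sH}$ fails to be boundedly invertible, i.e.\ when $-\alpha^{-1}=|\alpha|^{-1}>0$ is an eigenvalue of $K(\mu)$. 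Because $K(\mu)\geq0$ is compact, its largest eigenvalue equals $\|K(\mu)\|$, so it suffices to produce one $\mu_0<0$ with $\|K(\mu_0)\|=|\alpha|^{-1}$; then $\mu_0$ is a negative eigenvalue of $A+\alpha V$, discrete by item $(ii)$.

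The heart of the argument is the behaviour of $\|K(\mu)\|$ as $\mu\to0^-$, and this is exactly where the two hypotheses enter. I would pick $\varphi\in\ker(A)$ with $\|\varphi\|_{\sH}=1$; since $\ker(A)\not\subseteq\ker(V)$ and $\ker(V)=\ker(V^{1/2})$ for the nonnegative operator $V$, one may arrange $V^{1/2}\varphi\neq0$. From \eqref{birmi2} one has $\|K(\mu)\|=\|V^{1/2}(A-\mu I_{\sH})^{-1/2}\|^2$, and since $A\varphi=0$ the vector $\varphi$ is an eigenvector of $(A-\mu I_{\sH})^{-1/2}$ with eigenvalue $|\mu|^{-1/2}$. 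Evaluating on $\varphi$ then gives
\[
\|K(\mu)\|\geq\big\|V^{1/2}(A-\mu I_{\sH})^{-1/2}\varphi\big\|_{\sH}^2=|\mu|^{-1}\,\big\|V^{1/2}\varphi\big\|_{\sH}^2\xrightarrow[\mu\to0^-]{}+\infty .
\]

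To turn this into the desired value by an intermediate value argument I also need a point where $\|K(\mu)\|$ is small. As $\mu\to-\infty$ I would factor $V^{1/2}(A-\mu I_{\sH})^{-1/2}=\big[V^{1/2}(A+I_{\sH})^{-1/2}\big]\big[(A+I_{\sH})^{1/2}(A+|\mu|I_{\sH})^{-1/2}\big]$, where the first factor is compact (by \eqref{twee}) and the second is uniformly bounded and converges strongly to $0$; the product of a compact operator with a uniformly bounded family converging strongly to zero tends to zero in operator norm (pass to adjoints), so $\|K(\mu)\|\to0$ as $\mu\to-\infty$. Combined with the norm-continuity of $\mu\mapsto K(\mu)$ on $(-\infty,0)$, which is immediate from the resolvent identity and \eqref{twee}, the function $\mu\mapsto\|K(\mu)\|$ is continuous, tends to $+\infty$ at $0^-$ and to $0$ at $-\infty$; by the intermediate value theorem it attains the value $|\alpha|^{-1}$ at some $\mu_0<0$, which closes the argument as in the first paragraph.

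The step I expect to require the most care is the converse direction of the Birman--Schwinger correspondence, namely that $-\alpha^{-1}\in\sigma_p(K(\mu_0))$ genuinely forces $\mu_0\in\sigma(A+\alpha V)$: the eigenvector of $A+\alpha V$ must be recovered as (the closure of) $(A-\mu_0 I_{\sH})^{-1}V^{1/2}v$ from an eigenvector $v$ of $K(\mu_0)$, and the relevant domain and closure manipulations have to be justified; I would handle this exactly as in \cite[Proof of Theorem 2.3]{GLMZ05}. The remaining points (nonnegativity and compactness of $K(\mu)$, the identity $\|K(\mu)\|=\|V^{1/2}(A-\mu I_{\sH})^{-1/2}\|^2$, and $\ker(V)=\ker(V^{1/2})$) are routine.
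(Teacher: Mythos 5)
Your argument is correct and follows essentially the same route as the paper: both rest on the Birman--Schwinger family $K(\mu)$, the blow-up $\|K(\mu)\|\to+\infty$ as $\mu\uparrow 0$ forced by a kernel element of $A$ not annihilated by $V^{1/2}$, and the recovery of an eigenvector of $A+\alpha V$ from an eigenvector of $K(\mu_0)$. The only differences are in execution: you obtain the blow-up by evaluating $V^{1/2}(A-\mu I_{\sH})^{-1/2}$ directly on the normalized kernel vector (the paper uses a quadratic-form/spectral-measure computation), you spell out the decay of $\|K(\mu)\|$ as $\mu\to-\infty$ and the intermediate value step that the paper leaves implicit, and you delegate the final eigenvector reconstruction to \cite{GLMZ05}, which is precisely the computation the paper carries out in \eqref{nimmes}--\eqref{222}.
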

\begin{proof}
By assumption there exists $k\in\ker(A)$, $\Vert k\Vert_{\sH}=1$, such that $Vk\not=0$ and hence also $V^{1/2} k \not= 0$. From this we conclude that 
there exists $f\in\dom \big(V^{1/2}\big)$ such that $h= V^{1/2} f$ satisfies $(h,k)_{\sH} \not= 0$ as otherwise 
$k\in\big(\ran \big(V^{1/2}\big)\big)^\bot=\ker\big(V^{1/2}\big)$.
We shall now make use of the orthogonal direct sum 
 decomposition
 \begin{equation}\label{decospacesk}
 \sH=\textrm{lin.span}\{k\}\,\oplus\,\bigl(\textrm{lin.span}\{k\}\bigr)^\bot 
\end{equation}
and denote the orthogonal projection in $\sH$ onto $(\textrm{lin.span}\{k\})^\bot$ by $P$. Then 
\begin{equation}\label{h0}
V^{1/2} f=h= (h, k)_{\sH} k + Ph
\end{equation}and for $\nu <0$ 
 it follows from $(A-\nu I_{\sH})^{-1}k=-\frac{1}{\nu} k$ and \eqref{decospacesk}  that 
 \begin{equation*}
 \begin{split}
  \bigl(V^{1/2}(A-\nu I_{\sH})^{-1} V^{1/2}f,f\bigr)_{\sH}&= \bigl((A-\nu I_{\sH})^{-1} h,h\bigr)_{\sH}  \\
  &=\bigl((A-\nu I_{\sH})^{-1} ((h, k)_{\sH} k+Ph), (h, k)_{\sH} k+Ph\bigr)_{\sH}  \\ 
   &=- \frac{\vert(h, k)_{\sH}\vert^2}{\nu}(k,k)_{\sH}+\bigl((A-\nu I_{\sH})^{-1} Ph,P h\bigr)_{\sH}   \\
 &=- \frac{\vert(h, k)_{\sH}\vert^2}{\nu}+\int_0^\infty \frac{1}{\lambda-\nu}d (E_A(\lambda)Ph,Ph)_{\sH},
 \end{split}
 \end{equation*}
where $E_A(\lambda)$, $\lambda \in \dR$, denotes the family of spectral projections of the self-adjoint operator $A$.
Since $(h, k)_{\sH}\not=0$ the first 
term tends to $+\infty$ as $\nu\uparrow 0$ and by
monotone convergence the spectral integral converges in $[0,+\infty]$ as $\nu\uparrow 0$. Hence, we conclude
 \begin{equation}\label{gutesache}
\lim_{\nu\uparrow 0}\,\bigl(V^{1/2}(A-\nu I_{\sH})^{-1} V^{1/2}f,f\bigr)_{\sH}=+\infty.
\end{equation}

We note that for $\nu<0$ the Birman--Schwinger operator $K(\nu)$ in \eqref{birmi}--\eqref{birmi2} is nonnegative and compact. Furthermore,
from \eqref{gutesache} we conclude that
\begin{equation}
\lim_{\nu\uparrow 0}\,\bigl\Vert K(\nu) \bigr\Vert_{\cB(\sH)} = +\infty
\end{equation}
and since the operator norm of the nonnegative compact operator $K(\nu)$, $\nu <0$, coincides with its largest eigenvalue 
we conclude that for any $\alpha<0$ there exist $\nu_\alpha<0$ and $k_\alpha\in\sH$, $k_\alpha\not=0$, such that 
\begin{equation}\label{qaa}
 K(\nu_\alpha)k_\alpha=-\frac{1}{\alpha} k_\alpha. 
\end{equation}
Now consider $f_\alpha=\overline{(A-\nu_\alpha I_{\sH})^{-1} V^{1/2}} k_\alpha$ (see also \cite{KK66} or \cite[Proof of Theorem~3.2]{GLMZ05} for the following arguments) 
and observe first that
\begin{equation}\label{nimmes}
\begin{split}
k_\alpha&= \bigl[K(z)+\alpha^{-1} I_{\sH}\bigr]^{-1}\bigl[K(z)-K(\nu_\alpha)\bigr]k_\alpha\\
&=(z-\nu_\alpha)\bigl[K(z)+\alpha^{-1} I_{\sH}\bigr]^{-1}V^{1/2}(A-z I_{\sH})^{-1}\overline{(A-\nu_\alpha I_{\sH})^{-1} V^{1/2}} k_\alpha\\
&=(z-\nu_\alpha)\bigl[K(z)+\alpha^{-1} I_{\sH}\bigr]^{-1}V^{1/2}(A-z I_{\sH})^{-1}f_\alpha, \quad z \in \rho(A), 
\end{split}
\end{equation}
and hence, in particular, $f_\alpha\not=0$ as otherwise $k_\alpha=0$. Using \eqref{nimmes} we see on the one hand
\begin{equation}\label{111}
 \begin{split}
  &\overline{(A-z I_{\sH})^{-1} V^{1/2}} k_\alpha\\
  &\quad=(z-\nu_\alpha)\overline{(A-z I_{\sH})^{-1} V^{1/2}}\bigl[K(z)+\alpha^{-1} I_{\sH}\bigr]^{-1}V^{1/2}(A-z I_{\sH})^{-1}f_\alpha\\
  &\quad= (z-\nu_\alpha)\bigl[(A-z I_{\sH})^{-1}-(A+\alpha V-z I_{\sH})^{-1}\bigr]f_\alpha,
 \end{split}
\end{equation}
where \eqref{resi} was used in the last equality. On the other hand, by the resolvent identity one obtains 
\begin{equation}\label{222}
 \begin{split}
  &\overline{(A-z I_{\sH})^{-1} V^{1/2}} k_\alpha\\
  &\quad = \overline{(A-\nu_\alpha I_{\sH})^{-1} V^{1/2}} k_\alpha + (z-\nu_\alpha)(A-z I_{\sH})^{-1}\overline{(A-\nu_\alpha I_{\sH})^{-1} V^{1/2}} k_\alpha\\
  &\quad = f_\alpha + (z-\nu_\alpha)(A-z I_{\sH})^{-1} f_\alpha.
 \end{split}
\end{equation}
It follows from \eqref{111} and \eqref{222} that  
$(\nu_\alpha-z) (A+\alpha V-z I_{\sH})^{-1}f_\alpha=f_\alpha$
which implies $f_\alpha\in\dom(A+\alpha V)$ and $(A+\alpha V) f_\alpha=\nu_\alpha  f_\alpha$. Hence $\nu_\alpha$ is an eigenvalue of $A+\alpha V$, thus
$\sigma(A+\alpha V)\cap(-\infty,0)\not=\emptyset$ for any $\alpha < 0$.
\end{proof}

\begin{remark}\label{remarkjussi}
We note that for the unperturbed nonnegative self-adjoint operator $A$ in Hypothesis~\ref{h1} it is only assumed that $0 \in \sigma_p(A)$, but no further restrictions on the spectrum of $A$ are required; for example, in general $0$ may be an eigenvalue of infinite multiplicity or an accumulation point of positive spectrum of $A$. In the special case where $0$ is an isolated eigenvalue of finite multiplicity of $A$, the spectral instability of $A$ in Theorem~\ref{fieldsmedalsk} would already follow from well-known results 
in analytic perturbation theory, see, for instance, \cite[Sect.~VII.3]{Ka80}, \cite[Theorems~XII.8, XII.9]{RS78}, \cite[Ch.~II]{Re69} and monotonicity of eigenvalues.
${}$ \hfill $\diamond$
\end{remark}

\section{Spectral instability of the Neumann Laplacian} \lb{s3} 

In this section we shall show that Theorem~\ref{fieldsmedalsk} applies to the Neumann Laplacian on bounded Lipschitz domains,  (arbitrary) intervals, and graphs, and conclude spectral instability for certain classes of potentials $V$ that are relatively compact.

In the following let $\Omega\subset\dR^n$, $n\geq 2$, be a bounded Lipschitz domain and let $\nu$ be the unit normal vector field pointing outwards on $\partial\Omega$. 
We shall use the notation
\begin{equation} 
H^{3/2}_\Delta(\Omega)=\bigl\{f\in H^{3/2}(\Omega) \, \big| \, \Delta f\in L^2(\Omega)\bigr\},
\end{equation} 
where $H^{3/2}(\Omega)$ is the $L^2$-based Sobolev space on $\Omega$ of fractional order $3/2$. We recall from \cite{BGM19,GM11}
that the Dirichlet trace mapping 
$C^\infty(\overline\Omega)\ni f\mapsto f\vert_{\partial\Omega}$  and 
the Neumann trace mapping 
$C^\infty(\overline\Omega)\ni f\mapsto \nu\cdot\nabla f\vert_{\partial\Omega}$ 
extend by continuity to continuous surjective mappings 
\begin{equation}\label{taus}
 \tau_D: H^{3/2}_\Delta(\Omega) \rightarrow H^{1}(\partial\Omega)\quad\text{and}\quad \tau_N: H^{3/2}_\Delta(\Omega)\rightarrow L^2(\partial\Omega),
\end{equation}
respectively,
where $H^{1}(\partial\Omega)$ denotes the first-order $L^2$-based Sobolev space on $\partial\Omega$. In the next corollary we study the weak coupling behaviour of the 
Neumann Laplacian 
\begin{equation}\label{neumannii}
 A_N f=-\Delta f,\quad f \in \dom (A_N)= \bigl\{g\in H^{3/2}_\Delta(\Omega)\, \big| \,\tau_N g=0\bigr\},
\end{equation}
which is self-adjoint in $L^2(\Omega)$, see, for instance, \cite[Theorem 6.10]{BGM19} or \cite[Theorem 2.6 and Lemma 4.8]{GM08} and also \cite{JK81}.

\begin{corollary}\label{cor1}
Let $\Omega\subset\dR^n$ be a bounded Lipschitz domain, $n \in \N$, $n\geq 2$, suppose that  $A_N$ is the self-adjoint Neumann Laplacian in $L^2(\Omega)$, and assume that $V\not=0$ is a nonnegative function such that $V\in L^p(\Omega)$ with $p\geq 2$ if $n=2$ and $p>2n/3$ if $n\geq 3$.
Then 
\begin{equation}
 (A_N + \alpha V)f=-\Delta f+\alpha Vf,\quad f \in \dom (A_N + \alpha V)=\dom (A_N),
\end{equation}
is self-adjoint in $L^2(\Omega)$, 
\begin{equation} 
V(A_N - z I)^{-1} \in \cB_{\infty}\big(L^2(\Omega)\big), \quad z \in \rho(A_N), 
\end{equation} 
and 
\begin{equation} 
\sigma(A_N+\alpha V)\cap(-\infty,0)\not=\emptyset \, \text{ for {\bf any} $\alpha<0$.}
\end{equation} 
Moreover, for $0 < -\alpha$ sufficiently small, the unique eigenvalue $\nu(\alpha) \in (-\infty,0)$ of $A_N + \alpha V$ satisfies 
\begin{equation}
\nu (\alpha) \underset{\alpha \uparrow 0}{=} \f{\alpha}{|\Omega|} \int_{\Omega} V(x) \, d^n x + \Oh\big(\alpha^2\big),    \lb{3.7} 
\end{equation}
where $|\Omega|$ abbreviates the volume of $\Omega$. 
\end{corollary}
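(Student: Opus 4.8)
The plan is to prove the three qualitative assertions by verifying the hypotheses of Theorem~\ref{fieldsmedalsk}, and then to establish the asymptotic expansion \eqref{3.7} by a refined analysis of the Birman--Schwinger eigenvalue. For the qualitative part, first I would take $S$ to be the minimal (Dirichlet-type) symmetric operator whose Friedrichs and Neumann extensions are the usual Laplacians, so that $A_N$ is a nonnegative self-adjoint extension of $S$. Since $\Omega$ is bounded and connected, $\ker(A_N)$ is spanned by the constant function $\mathbf 1$, so $\ker(A_N)\not=\{0\}$. The integrability hypotheses on $V$ ($p\geq 2$ for $n=2$, $p>2n/3$ for $n\geq 3$) are exactly the Sobolev/Rellich-type conditions guaranteeing that multiplication by $V$ is relatively compact with respect to $A_N$; I would invoke the standard embedding $H^{3/2}(\Omega)\hookrightarrow L^q(\Omega)$ together with H\"older's inequality to check $V(A_N+I)^{-1}\in\cB_\infty(L^2(\Omega))$, which simultaneously gives self-adjointness of $A_N+\alpha V$ and the displayed relative-compactness statement. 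Finally, since $V\geq 0$ and $V\not\equiv 0$, the constant function is not annihilated by $V$ (otherwise $V=0$ a.e.), so $\ker(A_N)\not\subseteq\ker(V)$, and Theorem~\ref{fieldsmedalsk} yields the existence of a negative eigenvalue for every $\alpha<0$.

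For the asymptotic formula \eqref{3.7}, the idea is to track the unique negative eigenvalue through the Birman--Schwinger condition: $\nu<0$ is an eigenvalue of $A_N+\alpha V$ precisely when $-\alpha^{-1}$ is an eigenvalue of the compact nonnegative operator $K(\nu)=\overline{V^{1/2}(A_N-\nu I)^{-1}V^{1/2}}$. I would isolate the singular part of the resolvent as $\nu\uparrow 0$ by writing, with $P_0=(\,\cdot\,,\mathbf 1)_{L^2(\Omega)}\,\mathbf 1/|\Omega|$ the projection onto $\ker(A_N)$,
\begin{equation*}
(A_N-\nu I)^{-1}=-\frac{1}{\nu}P_0+(A_N-\nu I)^{-1}(I-P_0),
\end{equation*}
where the second term stays bounded as $\nu\uparrow 0$ because $0$ is isolated in the spectrum (the essential spectrum is $[\lambda_1,\infty)$ with $\lambda_1>0$ the first nonzero Neumann eigenvalue). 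Sandwiching by $V^{1/2}$ gives
\begin{equation*}
K(\nu)=-\frac{1}{\nu}\,\frac{1}{|\Omega|}\,(\,\cdot\,,V^{1/2}\mathbf 1)_{L^2(\Omega)}\,V^{1/2}\mathbf 1+R(\nu),
\end{equation*}
where $R(\nu)$ converges in operator norm as $\nu\uparrow 0$ to $R(0)=\overline{V^{1/2}(A_N)^{-1}(I-P_0)V^{1/2}}$, a bounded nonnegative operator. The leading term is a rank-one operator with the single nonzero eigenvalue $-\frac{1}{\nu|\Omega|}\|V^{1/2}\mathbf 1\|^2=-\frac{1}{\nu|\Omega|}\int_\Omega V\,d^nx$, which blows up as $\nu\uparrow 0$ and therefore carries the largest eigenvalue of $K(\nu)$.

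The main step is then a rank-one perturbation analysis: I would apply standard analytic perturbation theory (or the Feshbach--Schur/Rayleigh--Schr\"odinger scheme) to the largest eigenvalue $\kappa(\nu)$ of $K(\nu)$, obtaining
\begin{equation*}
\kappa(\nu)=-\frac{1}{\nu|\Omega|}\int_\Omega V\,d^nx+O(1)\quad\text{as }\nu\uparrow 0,
\end{equation*}
and then solving the Birman--Schwinger equation $\kappa(\nu(\alpha))=-\alpha^{-1}$ by implicit inversion for small $|\alpha|$. To leading order this gives $-\alpha^{-1}\approx-\nu(\alpha)^{-1}|\Omega|^{-1}\int_\Omega V\,d^nx$, i.e.\ $\nu(\alpha)=\frac{\alpha}{|\Omega|}\int_\Omega V\,d^nx(1+o(1))$; keeping the first correction term, which comes from $R(0)$ evaluated on the normalized eigenvector $V^{1/2}\mathbf 1/\|V^{1/2}\mathbf 1\|$, produces the stated $O(\alpha^2)$ remainder. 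The hard part will be justifying the uniqueness of the negative eigenvalue and the validity of the analytic expansion uniformly as $\nu\uparrow 0$: one must show that only the single rank-one singular direction can produce an eigenvalue $\geq -\alpha^{-1}$ for small $|\alpha|$, since all other eigenvalues of $K(\nu)$ remain bounded, and one must verify that the eigenvalue $\kappa(\nu)$ is simple and real-analytic in $\nu$ near $0$ so that the implicit function theorem applies. This separation-of-scales argument, together with the norm convergence of $R(\nu)$, is exactly what guarantees both the uniqueness claim and the quadratic error bound.
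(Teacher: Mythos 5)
Your verification of the three qualitative assertions follows the paper's proof essentially verbatim: same minimal symmetric operator $S$, same identification of $\ker(A_N)$ with the constants, the same Sobolev-embedding-plus-H\"older argument for relative compactness (the paper is a bit more careful here, working in $H^{3/2-\delta/2}(\Omega)$ so that the compact embedding $H^{3/2}(\Omega)\hookrightarrow H^{3/2-\delta/2}(\Omega)$ combined with the boundedness of $(A_N+I)^{-1}:L^2(\Omega)\to H^{3/2}(\Omega)$ actually delivers compactness of $V(A_N+I)^{-1}$, not just boundedness), and the same appeal to Theorem~\ref{fieldsmedalsk}. Where you genuinely diverge is the expansion \eqref{3.7}: the paper dispatches it in one sentence by noting that $0$ is a \emph{simple isolated} eigenvalue of $A_N$ (the Neumann Laplacian on a bounded Lipschitz domain has compact resolvent) with normalized eigenfunction $|\Omega|^{-1/2}$, so analytic first-order Rayleigh--Schr\"odinger perturbation theory immediately gives $\nu(\alpha)=\alpha\,\big(V|\Omega|^{-1/2},|\Omega|^{-1/2}\big)_{L^2(\Omega)}+O\big(\alpha^2\big)$ together with uniqueness of the negative eigenvalue for small $|\alpha|$. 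Your route instead isolates the singular rank-one part of the Birman--Schwinger operator $K(\nu)$, identifies its divergent eigenvalue $-\big(\nu|\Omega|\big)^{-1}\int_\Omega V\,d^nx$, and inverts $\kappa(\nu(\alpha))=-\alpha^{-1}$; this is correct (the norm convergence of the regular part $R(\nu)$ and the separation of scales you describe do yield both the $O(\alpha^2)$ remainder and uniqueness), but it is considerably more work in this setting. Its payoff is generality: it does not require $0$ to be isolated in $\sigma(A_N)$, and it is precisely the Birman--Schwinger/Fredholm-determinant scheme of Simon that the paper itself says one must fall back on in Remark~\ref{r3.6}, where $0$ is an embedded eigenvalue and Rayleigh--Schr\"odinger theory is unavailable.
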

\begin{proof}
Consider the densely defined closed symmetric operator 
\begin{equation}
 Sf=-\Delta f,\quad f \in \dom (S)= H^2_0(\Omega)=\overline{C_0^\infty(\Omega)}^{\Vert\cdot\Vert_{H^2(\Omega)}},
\end{equation}
in $L^2(\Omega)$ and note that $S$ is semibounded from below by $\kappa>0$, where $\kappa$ is the smallest eigenvalue of the Friedrichs (or Dirichlet) extension
\begin{equation}
 A_F f=-\Delta f,\quad  f \in \dom (A_F)=\big\{g \in H^{3/2}_\Delta(\Omega) \, \big| \, \tau_D g=0\big\};
\end{equation}
cf. \cite[Theorem 6.9 and Lemma 6.11]{BGM19} or \cite[Theorem 2.10 and Lemma 3.4]{GM08} and also \cite[Theorem B.2]{JK95}.
The Neumann Laplacian $A_N$ in \eqref{neumannii} 
is a self-adjoint extension of $S$ and one has $\ker(A_N)=\textrm{lin.span}\{1\}$. One notes that the condition $\ker(A_N)\not\subseteq\ker(V)$ in Theorem~\ref{fieldsmedalsk} 
is satisfied for the multiplication operator $V$ as otherwise the constant function would be in $\ker(V)$, which is only possible if $V=0$.

It remains to show that $V$ is relatively compact with respect to $A_N$ as then Hypothesis~\ref{h1} is satisfied and the statement follows from Theorem~\ref{fieldsmedalsk}.
In order to see that $V$ is relatively compact with respect to $A_N$
we shall use that for $0<\delta<1$ one has 
\begin{equation}\label{embeddi5}
\Vert f\Vert_{L^{2q}(\Omega)}\leq C_q\Vert f\Vert_{H^{3/2-\delta/2}(\Omega)} \, \text{ for } \, q\in \begin{cases} [1,\infty] & \text{if}\,\, n=2, \\ 
[1,n/(n-3+\delta)] & \text{if}\,\, n \in \N, \, n \geq 3, 
\end{cases}
\end{equation}
by \cite[Theorem~8.12.6.I]{Bh12}. 
Let us consider the case $n\geq 3$ first. As $\Omega$ is bounded we have $L^{p_2}(\Omega)\subseteq L^{p_1}(\Omega)$, $1\leq p_1\leq p_2\leq\infty$, and hence under our assumptions
there exists $0<\delta<1$ such that $V\in L^p(\Omega)$, where $p=2n/(3-\delta)$. This yields $V\in L^{2r}(\Omega)$, where $r=n/(3-\delta)$. For $s=n/(n-3+\delta)$
we have $1/r+1/s=1$ and the H\"older inequality together with \eqref{embeddi5} leads to
\begin{equation} 
\Vert V f\Vert_{L^2(\Omega)}\leq\Vert V\Vert_{L^{2r}(\Omega)}\Vert f\Vert_{L^{2s}(\Omega)} \leq C_s\Vert V\Vert_{L^{2r}(\Omega)}\Vert f\Vert_{H^{3/2-\delta/2}(\Omega)},
\end{equation} 
so that 
\begin{equation}\label{vbounded}
V:H^{3/2-\delta/2}(\Omega)\rightarrow L^2(\Omega)
\end{equation}
is bounded. In the case $n=2$ it follows in the same way with $V\in L^{2r}(\Omega)$, $r=1$, and $s=\infty$ that the mapping 
$V$ in \eqref{vbounded} is bounded. 

Next, one observes that $(A_N + I)^{-1}:L^2(\Omega)\rightarrow H^{3/2}(\Omega)$ is bounded; this follows, for instance, from the norm equivalences on $\dom(A_N)$ in \cite[Theorem 6.10]{BGM19}.
As $\Omega$ is bounded it is clear that the embedding $H^{3/2}(\Omega)\hookrightarrow H^{3/2-\delta/2}(\Omega)$ is compact (see, e.g., \cite[Theorem~8.12.6.IV]{Bh12}) and hence 
$(A_N + I)^{-1}:L^2(\Omega)\rightarrow H^{3/2-\delta/2}(\Omega)$ is compact. Together with \eqref{vbounded} we obtain that 
$V(A_N + I)^{-1}:L^2(\Omega)\rightarrow L^2(\Omega)$ is compact, that is,  $V$ is relatively compact with respect to $A_N$.

Finally, \eqref{3.7} is a consequence of analytic first-order Rayleigh--Schr\"odinger perturbation theory (see, e.g., \cite[eq.~(II.2.36), Sect.~VII.3]{Ka80}, \cite[p.~5, Theorems~XII.8, XII.9]{RS78}, \cite[Ch.~II]{Re69}), since $|\Omega|^{-1/2}$ is the normalized eigenfunction corresponding to the simple discrete eigenvalue $0$ of $A_N$. 
\end{proof}

For completeness we also discuss the one-dimensional case for a finite interval $\Omega=(a,b)$. In this context we recall that the self-adjoint Neumann Laplacian in $L^2((a,b))$ is given by
\begin{equation}\label{neumanni}
 A_N f=-f'',\quad f \in \dom (A_N)= \bigl\{g \in H^2((a,b)) \, \big| \,g'(a)=g'(b)=0\bigr\}.
\end{equation}

\begin{corollary}\label{cor2}
Let $(a,b)$ be a finite interval, let $A_N$ be the self-adjoint Neumann Laplacian in $L^2((a,b))$, and assume that $V\not=0$ is a nonnegative function
such that
$V\in L^p((a,b))$ with $p\geq 2$.
Then 
\begin{equation}
 (A_N + \alpha V)f=-f''+\alpha V f,\quad f \in \dom (A_N + \alpha V)=\dom (A_N),
\end{equation}
is self-adjoint in $L^2((a,b))$, 
\begin{equation} 
V(A_N -zI)^{-1} \in \cB_{\infty}\big(L^2((a,b))\big),  \quad z\in \rho(A_N), 
\end{equation} 
and 
\begin{equation} 
\sigma(A_N + \alpha V)\cap(-\infty,0)\not=\emptyset \, \text{ for {\bf any} $\alpha<0$.}
\end{equation} 
Moreover, for $0 < -\alpha$ sufficiently small, the unique eigenvalue $\nu(\alpha) \in (-\infty,0)$ of $A_N + \alpha V$ satisfies 
\begin{equation}
\nu (\alpha) \underset{\alpha \uparrow 0}{=} \f{\alpha}{b-a} \int_a^b V(x) \, d x + \Oh\big(\alpha^2\big).   \lb{3.17} 
\end{equation}
\end{corollary}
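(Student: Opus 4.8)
The plan is to verify the hypotheses of Theorem~\ref{fieldsmedalsk} and then read off the asymptotics from perturbation theory, following the scheme of the proof of Corollary~\ref{cor1} but exploiting the simplifications available in one dimension. First I would introduce the densely defined closed symmetric operator $Sf = -f''$ on $\dom(S) = H^2_0((a,b))$, which is semibounded from below by $\kappa = (\pi/(b-a))^2 > 0$, the smallest eigenvalue of the Dirichlet extension $A_F$. The Neumann Laplacian $A_N$ in \eqref{neumanni} is a self-adjoint extension of $S$, and solving $-f'' = 0$ subject to $f'(a) = f'(b) = 0$ shows $\ker(A_N) = \textrm{lin.span}\{1\}$. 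Since $V \neq 0$, the constant function cannot lie in $\ker(V)$, so the condition $\ker(A_N) \not\subseteq \ker(V)$ of Theorem~\ref{fieldsmedalsk} holds.

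The remaining hypothesis to check is relative compactness of $V$ with respect to $A_N$. I would fix any $\delta \in (0,1)$ and invoke the one-dimensional Sobolev embedding $H^{3/2-\delta/2}((a,b)) \hookrightarrow L^\infty((a,b))$, which is valid because $3/2 - \delta/2 > 1/2$. Combined with Hölder's inequality, $\|Vf\|_{L^2} \leq \|V\|_{L^p}\|f\|_{L^{2p/(p-2)}}$ for $p > 2$ (read as $\|V\|_{L^2}\|f\|_{L^\infty}$ when $p = 2$), this yields boundedness of $V: H^{3/2-\delta/2}((a,b)) \to L^2((a,b))$ for every admissible $p \geq 2$. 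Since $(A_N + I)^{-1}: L^2((a,b)) \to H^2((a,b))$ is bounded by elementary elliptic regularity for ordinary differential operators, and the embedding $H^2((a,b)) \hookrightarrow H^{3/2-\delta/2}((a,b))$ is compact on the bounded interval, the composition $V(A_N + I)^{-1}$ is compact. Thus Hypothesis~\ref{h1} is satisfied; self-adjointness of $A_N + \alpha V$ on $\dom(A_N)$ and the resolvent compactness statement follow from the general discussion preceding Theorem~\ref{fieldsmedalsk}, and $\sigma(A_N + \alpha V) \cap (-\infty,0) \neq \emptyset$ for all $\alpha < 0$ is then immediate from that theorem.

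Finally, the asymptotic expansion \eqref{3.17} follows from analytic first-order Rayleigh--Schrödinger perturbation theory exactly as in Corollary~\ref{cor1}: the eigenvalue $0$ of $A_N$ is simple and isolated, the next Neumann eigenvalue being $(\pi/(b-a))^2$, and $(b-a)^{-1/2}$ is the corresponding normalized eigenfunction. The first-order correction is therefore $\alpha (b-a)^{-1}\int_a^b V(x)\,dx$, which gives \eqref{3.17}.

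The conceptual content is carried entirely by Theorem~\ref{fieldsmedalsk}, so in one dimension there is no genuine obstacle; the least routine step is the relative-compactness estimate, where one must confirm that the Sobolev exponent $3/2 - \delta/2$ still exceeds the critical value $1/2$, so that the $L^\infty$ embedding—and hence the Hölder bound down to $p = 2$—remains available. Since this holds for every $\delta \in (0,1)$, the argument goes through cleanly.
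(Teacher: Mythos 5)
Your proposal is correct and follows essentially the same route as the paper: verify Hypothesis~\ref{h1} and the kernel condition $\ker(A_N)\not\subseteq\ker(V)$, establish relative compactness via a Sobolev embedding into $L^\infty$ combined with H\"older and the compactness of the resolvent as a map into a lower-order Sobolev space, then invoke Theorem~\ref{fieldsmedalsk} and first-order Rayleigh--Schr\"odinger perturbation theory for \eqref{3.17}. The only (immaterial) difference is that you route the compactness argument through $H^{3/2-\delta/2}((a,b))$, mimicking the multidimensional proof of Corollary~\ref{cor1}, whereas the paper uses the slightly more elementary chain $H^2((a,b))\hookrightarrow H^1((a,b))\hookrightarrow L^\infty((a,b))$ together with $\Vert Vg\Vert_{L^2}\leq \Vert V\Vert_{L^2}\Vert g\Vert_{L^\infty}$, which suffices since $V\in L^p\subseteq L^2$ on the bounded interval.
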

\begin{proof}
Consider the densely defined closed symmetric operator 
\begin{equation}
 Sf=-f'',\quad f \in \dom (S)= \bigl\{g \in H^2((a,b)) \, \big| \,g(a)=g(b)=g'(a)=g'(b)=0\bigr\},
\end{equation}
in $L^2((a,b))$ and note that $S$ is semibounded from below by $\kappa=(\pi/(b-a))^2>0$. The Neumann Laplacian $A_N$ in \eqref{neumanni} 
is a self-adjoint extension of $S$ and one has $\ker(A_N)=\textrm{lin.span}\{1\}$. Note that the condition $\ker(A_N)\not\subseteq \ker(V)$ in Theorem~\ref{fieldsmedalsk} 
is satisfied for the multiplication operator $V$ as otherwise the constant function would be in $\ker(V)$, which is only possible if $V=0$.
We claim that $V$ is relatively compact with respect to $A_N$. In fact, using the inequality 
\begin{equation} 
\Vert g\Vert_{L^\infty((a,b))}\leq C\Vert g\Vert_{H^1((a,b))}, \quad g\in H^1((a,b)),
\end{equation} 
one has
\begin{align}
\begin{split} 
\Vert Vg\Vert_{L^2((a,b))}\leq  \Vert V\Vert_{L^2((a,b))} \Vert g\Vert_{L^{\infty}((a,b))} \leq C\Vert V\Vert_{L^2((a,b))} \Vert g\Vert_{H^1((a,b))},&  \\ 
g\in H^1((a,b)),&
\end{split} 
\end{align}
and hence $V:H^1((a,b)) \rightarrow L^2((a,b))$ is bounded. Therefore, as  
$(A_N + I)^{-1}:L^2((a,b))\rightarrow H^2((a,b))$ is bounded and the embedding $H^2((a,b))\hookrightarrow H^1((a,b))$ is compact we see that
$(A_N + I)^{-1}:L^2((a,b))\rightarrow H^1((a,b))$ is compact and thus also $V(A_N + I)^{-1}:L^2((a,b))\rightarrow L^2((a,b))$ is compact. 

Relation \eqref{3.17} is the special one-dimensional case of \eqref{3.7} in Corollary~\ref{cor1}. 
\end{proof}

\begin{remark}\label{saythis}
The observations in Corollaries~\ref{cor1} and \ref{cor2} remain valid for more general classes of self-adjoint Laplacians. More precisely, if $\alpha\in L^\infty(\partial\Omega)$
is real-valued, then the Robin Laplacian
\begin{equation}\label{robini}
 A_\alpha f=-\Delta f,\quad f \in \dom (A_\alpha)= \bigl\{g\in H^{3/2}_\Delta(\Omega)\, \big| \,\tau_N g =\alpha\tau_D g\bigr\},
\end{equation}
is self-adjoint in $L^2(\Omega)$ and if, in addition, $A_\alpha$ is nonnegative and $\ker(A_\alpha)\not=\{0\}$, then  
\begin{equation} 
\sigma(A_\alpha+\alpha V)\cap(-\infty,0)\not=\emptyset \, \text{ for {\bf any} $\alpha<0$}
\end{equation} 
by Theorem~\ref{fieldsmedalsk} under the same integrability assumptions on $V$ as in Corollary~\ref{cor1} if $\ker(A_\alpha)\not\subseteq\ker(V)$ holds. The latter condition is satisfied, for instance, if $V(x)>0$ for a.e. $x\in\Omega$.
Similarly, in the case of a finite interval the Neumann realization $A_N$ of $-d^2/dx^2$ in Corollary~\ref{cor2} can be replaced by any nonnegative self-adjoint realization $A$
of $-d^2/dx^2$ in $L^2((a,b))$ such that $\ker(A)\not=\{0\}$. As $\ker(A)\subseteq\textrm{lin.span}\{1,x\}$ in this case, it is clear that $\ker(A) \not\subseteq\ker(V)$ holds. \hfill $\diamond$
\end{remark}

Next, we consider the case of the Neumann (or Kirchhoff) Laplacian on a compact finite (not necessarily connected) 
graph $\Gamma$, which consists of $e<\infty$ edges (finite intervals) $\mathcal E_n$, $n=1,\dots, e$, and 
$v<\infty$ vertices $\mathcal V_m$, $m=1,\dots,v$. One recalls from \cite{BK13,K24} that the self-adjoint Neumann Laplacian in 
$L^2(\Gamma)=\oplus_{n=1}^e L^2(\mathcal E_n)$ is given by 
\begin{align}\label{neumanngraph}
&  A_N f=(-f_n'')_{n=1}^{e},    \\ 
& f \in \dom (A_N) 
 = \left\{g=(g_n)_{n=1}^{e}\,\bigg|\, 
 \begin{matrix}g_n\in H^2(\mathcal E_n), \,g(x_i)=g(x_j), x_i,x_j\in \mathcal V_m,\\
 \sum_{x_j\in \mathcal V_m}\partial g(x_j)=0, m=1,\dots,v, \end{matrix}
 \right\},    \no
 \end{align}
 and that the multiplicity of $0\in\sigma_p(A_N)$ equals the number of connected components of the metric graph $\Gamma$. 

\begin{corollary}\label{graphi}
Let $\Gamma$ be a compact finite 
graph, let $A_N$ be the self-adjoint Neumann Laplacian in $L^2(\Gamma)$, and assume that $V\not=0$ is a nonnegative function
such that
$V\in L^p(\Gamma)$ with $p\geq 2$.
Then 
\begin{equation}
 (A_N + \alpha V)f=A_N f +\alpha V f,\quad f \in \dom (A_N + \alpha V)=\dom (A_N),
\end{equation}
is self-adjoint in $L^2(\Gamma)$, 
\begin{equation} 
V(A_N -zI)^{-1} \in \cB_{\infty}\big(L^2( \Gamma)\big),  \quad z\in \rho(A_N), 
\end{equation} 
and 
\begin{equation} 
\sigma(A_N + \alpha V)\cap(-\infty,0)\not=\emptyset \, \text{ for {\bf any} $\alpha<0$.}
\end{equation} 
\end{corollary}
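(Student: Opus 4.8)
The plan is to verify that Corollary~\ref{graphi} follows from Theorem~\ref{fieldsmedalsk} by checking Hypothesis~\ref{h1} and the additional kernel condition $\ker(A_N)\not\subseteq\ker(V)$, in direct analogy with the proofs of Corollary~\ref{cor1} and Corollary~\ref{cor2}. First I would introduce the underlying densely defined closed symmetric operator $S$ obtained from $A_N$ by imposing, on each edge $\mathcal E_n$, the full set of Dirichlet and Neumann conditions $g_n(x)=g_n'(x)=0$ at both endpoints, so that $\dom(S)=\oplus_{n=1}^e H^2_0(\mathcal E_n)$. This $S$ is semibounded from below with a strictly positive lower bound $\kappa>0$ (namely the smallest eigenvalue among the decoupled Dirichlet problems on the individual edges), and $A_N$ in \eqref{neumanngraph} is a nonnegative self-adjoint extension of $S$. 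Since $\Gamma$ has at least one connected component, the statement recorded after \eqref{neumanngraph} gives $\ker(A_N)\not=\{0\}$; indeed $\ker(A_N)$ is spanned by functions that are constant on each connected component.

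Next I would check $\ker(A_N)\not\subseteq\ker(V)$. Because $V\not=0$ is a nonnegative multiplication operator and every nonzero element of $\ker(A_N)$ is constant (hence nonzero a.e.) on at least one connected component, an element $k\in\ker(A_N)$ with $Vk=0$ a.e.\ would force $V=0$ a.e.\ on that component; choosing $k$ supported on a component where $V$ is not a.e.\ zero yields $Vk\not=0$, establishing the condition exactly as in the interval case of Corollary~\ref{cor2}.

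The remaining and essentially only substantive point is relative compactness of $V$ with respect to $A_N$, i.e.\ $V(A_N+I)^{-1}\in\cB_\infty\big(L^2(\Gamma)\big)$. Here the plan is to argue edgewise and then assemble the finitely many edges. On each edge $\mathcal E_n$ one has the one-dimensional Sobolev embedding $\Vert g_n\Vert_{L^\infty(\mathcal E_n)}\leq C\Vert g_n\Vert_{H^1(\mathcal E_n)}$, so that for $V\in L^p(\Gamma)$ with $p\geq 2$ the H\"older estimate $\Vert V_n g_n\Vert_{L^2(\mathcal E_n)}\leq \Vert V_n\Vert_{L^2(\mathcal E_n)}\Vert g_n\Vert_{L^\infty(\mathcal E_n)}\leq C\Vert V_n\Vert_{L^2(\mathcal E_n)}\Vert g_n\Vert_{H^1(\mathcal E_n)}$ shows that $V:\oplus_{n=1}^e H^1(\mathcal E_n)\rightarrow L^2(\Gamma)$ is bounded, just as in the proof of Corollary~\ref{cor2}. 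Combining this with the fact that $(A_N+I)^{-1}:L^2(\Gamma)\rightarrow \oplus_{n=1}^e H^2(\mathcal E_n)$ is bounded and that the embedding $\oplus_{n=1}^e H^2(\mathcal E_n)\hookrightarrow \oplus_{n=1}^e H^1(\mathcal E_n)$ is compact (a finite direct sum of compact embeddings on the bounded intervals $\mathcal E_n$), one obtains that $(A_N+I)^{-1}:L^2(\Gamma)\rightarrow\oplus_{n=1}^e H^1(\mathcal E_n)$ is compact and hence $V(A_N+I)^{-1}\in\cB_\infty\big(L^2(\Gamma)\big)$.

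With all hypotheses verified, Theorem~\ref{fieldsmedalsk} applies and yields the self-adjointness of $A_N+\alpha V$ on $\dom(A_N)$, the relative compactness statement $V(A_N-zI)^{-1}\in\cB_\infty\big(L^2(\Gamma)\big)$ for $z\in\rho(A_N)$, and the desired conclusion $\sigma(A_N+\alpha V)\cap(-\infty,0)\not=\emptyset$ for every $\alpha<0$. I expect the only mild obstacle to be bookkeeping at the vertices: one must confirm that the Kirchhoff matching and continuity conditions in \eqref{neumanngraph} are genuine \emph{self-adjoint} boundary conditions so that $A_N$ is self-adjoint and nonnegative (this is standard, see \cite{BK13,K24}), and that the domain of $S$ obtained by decoupling the edges is indeed a closed symmetric restriction of $A_N$ with strictly positive lower bound. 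Neither the vertex structure nor the possible disconnectedness of $\Gamma$ affects the argument, since the proof uses only that $\ker(A_N)$ is nontrivial and contains a function not annihilated by $V$, together with edgewise compactness; unlike the multidimensional case in Corollary~\ref{cor1}, no dimension-dependent Sobolev exponent is needed because each edge is one-dimensional.
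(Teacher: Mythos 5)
Your proposal is correct and matches the paper's intent exactly: the paper states only that the proof of Corollary~\ref{graphi} is similar to that of Corollary~\ref{cor2}, and your argument spells out precisely that adaptation (decoupled minimal operator $S$ on the edges, kernel spanned by componentwise constants so that $\ker(A_N)\not\subseteq\ker(V)$, and edgewise Sobolev/H\"older estimates plus the compact embedding $H^2\hookrightarrow H^1$ for relative compactness, followed by an application of Theorem~\ref{fieldsmedalsk}). No gaps.
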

The proof of Corollary~\ref{graphi} is similar to that of Corollary~\ref{cor2} and hence is not repeated here.

In the next corollary we consider a perturbed Neumann Laplacian in $L^2((0,\infty))$, where $0\in\sigma_p(A)$ is an embedded eigenvalue. 

\begin{corollary}\label{pncor}
Let
\begin{equation}\label{neumannsingu}
 A_N(q) f=-f''+ qf,\quad f \in \dom (A_N(q))= \bigl\{g \in H^2((0,\infty)) \, \big| \,g'(0)=0\bigr\}, 
\end{equation}
where 
\begin{equation}
q(x)=-\frac{2}{x^2+1}+\frac{8x^2}{(x^2+1)^2} = \frac{6x^2 - 2}{(x^2+1)^2},\quad x\geq 0,
\end{equation}
and assume that $V\not=0$ is a nonnegative function
such that $V\in L^2((0,\infty))$. Then 
\begin{equation}
 (A_N(q) + \alpha V)f=-f''+ qf +  \alpha V f,\quad f \in \dom (A_N(q) + \alpha V)=\dom (A_N(q)),
\end{equation}
is self-adjoint in $L^2((0,\infty))$, 
\begin{align} 
& V(A_N(q) -zI)^{-1} \in \cB_{\infty}\big(L^2((0,\infty))\big),  \quad z\in \rho(A_N(q)),   \\
& \sigma_{ess}(A_N(q) + \alpha V)=\sigma_{ess}(A_N(q))=[0,\infty), 
\end{align} 
and 
\begin{equation} \label{juhu}
\sigma(A_N(q) + \alpha V)\cap(-\infty,0)\not=\emptyset \, \text{ for {\bf any} $\alpha<0$.}
\end{equation} 
\end{corollary}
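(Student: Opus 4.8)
The plan is to realize Corollary~\ref{pncor} as a direct instance of Theorem~\ref{fieldsmedalsk}, with $A = A_N(q)$. Compared with Corollaries~\ref{cor1} and \ref{cor2}, two features are genuinely new and deserve attention: the underlying interval $(0,\infty)$ is unbounded, so relative compactness of $V$ can no longer be extracted from a compact Sobolev embedding; and $0$ sits at the very bottom of the essential spectrum rather than being an isolated eigenvalue, which is why analytic perturbation theory (cf.\ Remark~\ref{remarkjussi}) does not apply here and Theorem~\ref{fieldsmedalsk} is really needed. I would first record the structural facts. Since $q$ is real-valued, continuous, with $q(0)=-2$ and $q(x)=O(x^{-2})$ as $x\to\infty$, one has $q\in L^\infty((0,\infty))\cap L^2((0,\infty))$, so $A_N(q)=A_N^0+q$ is a bounded self-adjoint perturbation of the free Neumann Laplacian $A_N^0=-d^2/dx^2$ on $\dom(A_N^0)=\{g\in H^2((0,\infty)):g'(0)=0\}$; hence $A_N(q)$ is self-adjoint on this domain by Kato--Rellich, and $A_N(q)+\alpha V$ will be self-adjoint on the same domain once $V$ is relatively compact. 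Because $q$ decays like $x^{-2}$ it is relatively compact with respect to $A_N^0$, so $\sigma_{ess}(A_N(q))=\sigma_{ess}(A_N^0)=[0,\infty)$, and relative compactness of $V$ then preserves this, giving $\sigma_{ess}(A_N(q)+\alpha V)=[0,\infty)$.

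The crucial observation is that $\psi(x):=(x^2+1)^{-1}$ satisfies $\psi''=q\psi$ for exactly this $q$ (a one-line differentiation), obeys the Neumann condition $\psi'(0)=0$, and lies in $L^2((0,\infty))$; thus $\psi\in\ker(A_N(q))$ and $0\in\sigma_p(A_N(q))$. As $\infty$ is in the limit-point case, $\ker(A_N(q))$ is one-dimensional, $\ker(A_N(q))=\linspan\{\psi\}$. I would then argue $A_N(q)\geq 0$, so that $0$ is in fact the infimum of the spectrum: a negative eigenvalue would lie below $\inf\sigma_{ess}=0$ and hence be the isolated ground state, whose eigenfunction $\phi_0$ may be taken strictly positive; but $\phi_0$ and $\psi$ belong to distinct eigenvalues of a self-adjoint operator and are therefore orthogonal, which is impossible for two everywhere-positive functions. (Equivalently, the ground-state substitution $f=\psi g$ turns the Neumann form into $\int_0^\infty\psi^2|g'|^2\,dx\geq 0$, the boundary term at $0$ vanishing precisely because $\psi'(0)=0$.) Since $\psi>0$ everywhere, $\ker(A_N(q))\not\subseteq\ker(V)$ holds for any $V\not\equiv 0$, as $V\psi=0$ a.e.\ would force $V=0$ a.e. Taking $S$ to be the minimal operator associated with $-d^2/dx^2+q$, whose nonnegativity again follows from the ground-state transform on $C_0^\infty((0,\infty))$, we see that $A_N(q)$ is a nonnegative self-adjoint extension of $S$ with $\ker(A_N(q))\neq\{0\}$, as required by Hypothesis~\ref{h1}.

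The step I expect to require the most care is proving that $V$ is relatively compact with respect to $A_N(q)$, since the compact-embedding argument of Corollaries~\ref{cor1} and \ref{cor2} fails on the unbounded half-line. Here I would instead use the explicit Green's function: for $z=-\kappa^2$ with $\kappa>0$, the free Neumann resolvent $(A_N^0-z)^{-1}$ has kernel $G_z(x,y)=\kappa^{-1}\cosh(\kappa\min(x,y))\,e^{-\kappa\max(x,y)}$, for which $\int_0^\infty|G_z(x,y)|^2\,dy\leq C$ uniformly in $x$. Consequently $V(A_N^0-z)^{-1}$ has kernel $V(x)G_z(x,y)$ with $\iint|V(x)|^2|G_z(x,y)|^2\,dx\,dy\leq C\|V\|_{L^2}^2<\infty$, so it is Hilbert--Schmidt, hence compact; this is exactly where the $L^2$-hypothesis on $V$ enters. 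Since $\dom(A_N(q))=\dom(A_N^0)$ and $q$ is bounded, the resolvent identity gives $V(A_N(q)-z)^{-1}=V(A_N^0-z)^{-1}\bigl[I-q(A_N(q)-z)^{-1}\bigr]$, a product of a Hilbert--Schmidt operator and a bounded one, whence $V(A_N(q)-z)^{-1}\in\cB_\infty(L^2((0,\infty)))$ for all $z\in\rho(A_N(q))$.

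With all hypotheses of Theorem~\ref{fieldsmedalsk} verified, the self-adjointness of $A_N(q)+\alpha V$ and the invariance of the essential spectrum follow from the general discussion in Section~\ref{s2}, while \eqref{juhu} is an immediate consequence of the theorem itself. The main obstacle, as indicated, is the relative-compactness step on the non-compact interval; the Hilbert--Schmidt estimate via the exponentially decaying Neumann Green's function is the natural route, and the assumption $V\in L^2((0,\infty))$ is precisely what that estimate requires.
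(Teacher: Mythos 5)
Your proposal is correct and follows essentially the same route as the paper: verify Hypothesis~\ref{h1} together with $\ker(A_N(q))\not\subseteq\ker(V)$ and invoke Theorem~\ref{fieldsmedalsk}, using the explicit zero-energy eigenfunction $(x^2+1)^{-1}$ and the resolvent identity $V(A_N(q)-zI)^{-1}=V(A_N-zI)^{-1}\bigl[I-q(A_N(q)-zI)^{-1}\bigr]$ for relative compactness. The only (harmless) deviations are that you establish compactness of $V(A_N-zI)^{-1}$ by an explicit Hilbert--Schmidt bound on the Neumann Green's function rather than citing \cite[Problem~41]{RS78}, and you deduce $A_N(q)\geq 0$ from the impossibility of a positive ground state orthogonal to the positive kernel element (plus the ground-state substitution), which is equivalent to the paper's factorization $A_N(q)=BB^*$.
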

\begin{proof}
Since $q \in L^{\infty}((0,\infty))$, $A_N(q)$ is self-adjoint in $L^2((0,\infty))$ as the same is true for the unperturbed Neumann operator $A_N f=-f''$, $\dom (A_N)=\dom (A_N(q))$.
It is also clear that $\infty$ is in the limit point case for the differential expression $-(d^2/dx^2) + q(x)$, $x \in [0,\infty)$, and since $q\in L^1((0,\infty))$ it follows from \cite[Proposition~6.13.7]{BHS20} that $\sigma_{ess}(A_N(q))=\sigma_{ess}(A_N)=[0,\infty)$. Alternatively, one can argue that the resolvent difference of the full-line Schr\"odinger operator associated with $-(d^2/dx^2) + q(x)$, $x \in \dR$, in $L^2(\dR)$ and the direct sum of the corresponding two half-line Neumann operators in $L^2((-\infty,0)) \oplus L^2((0,\infty))$ is a rank-one operator and combine this with the fact that $q(x) = q(-x)$, $x \in [0,\infty)$, and the full-line Schr\"odinger operator has essential spectrum equal to $[0,\infty)$ as $\lim_{x \to \pm \infty} q(x) = 0$.
Moreover, it is easy to see that $0$ is a simple  eigenvalue of $A_N(q)$ with corresponding normalized eigenfunction 
\begin{equation}
f_0(x)= \f{2}{\pi^{1/2}}  \frac{1}{x^2+1}, \quad x \in [0,\infty),  \quad \|f_0\|_{L^2((0,\infty))} = 1, 
\end{equation}
and it follows from 
\begin{equation}
A_N(q) = B B^* \geq 0,
\end{equation}
that $A_N(q)$ is nonnegative. Here,
\begin{align}
\begin{split} 
& Bf = f' + \phi f, \quad f \in \dom(B) = H^1_0([0,\infty)),   \\
& B^*g = - g' + \phi g, \quad g \in \dom(B^*) = H^1([0,\infty)),   
\end{split}
\end{align}
where 
\begin{equation}
\phi(x) = f_0'(x)/f_0(x) = - \f{2x}{x^2 + 1}, \quad x \in [0,\infty).
\end{equation}
We also note that the condition $\ker(A_N(q))\not\subseteq \ker(V)$ in Theorem~\ref{fieldsmedalsk} 
is satisfied for the multiplication operator $V$ as otherwise $V=0$.
We claim that $V$ is relatively compact with respect to $A_N(q)$. In fact, for $z\in\dC\setminus [0,\infty)$ we have the identity 
\begin{equation}
(A_N(q)-zI)^{-1}=(A_N-zI)^{-1}-(A_N-zI)^{-1}q(A_N(q)-zI)^{-1}
\end{equation} 
and $V(A_N - zI)^{-1} \in \cB_{\infty}(L^2((0,\infty)))$ by \cite[Problem~41]{RS78} (for the half-line),
and thus also $V(A_N(q) - zI)^{-1} \in \cB_{\infty}(L^2((0,\infty)))$. This implies $\sigma_{ess}(A_N(q) + \alpha V)=\sigma_{ess}(A_N(q))=[0,\infty)$ and hence 
\eqref{juhu} follows from Theorem~\ref{fieldsmedalsk}.
\end{proof}

\begin{remark} \lb{r3.6}
Without going into more details we note that Corollary \ref{pncor} permits the analog of \eqref{3.7} and \eqref{3.17} in the following form: For $0 < -\alpha$ sufficiently small, the unique eigenvalue 
$\nu(\alpha) \in (-\infty,0)$ of $A_N(q) + \alpha V$ satisfies 
\begin{equation}
\nu (\alpha) \underset{\alpha \uparrow 0}{=} 4 \alpha \pi^{-1} \int_0^{\infty} \big(x^2+1\big)^{-2} V(x) \, d x + \Oh\big(\alpha^2\big).     \lb{3.38} 
\end{equation}
While \eqref{3.38} is not a result of analytic first-order Rayleigh--Schr\"odinger perturbation theory as $0$ is not a discrete eigenvalue of $A_N(q)$, one can apply the Fredholm determinant approach developed by Simon \cite{Si76} to arrive at \eqref{3.38}. 
\hfill $\diamond$
\end{remark}

\noindent {\bf Acknowledgments.} 
We are indebted to Petr Siegl for fruitful discussions and helpful remarks. J.B.\ is most grateful for a stimulating research stay at Baylor University, where some parts of this paper were written in October of 2023.~F.G.\ and H.S.\ gratefully acknowledge kind invitations to the Institute for Applied Mathematics at the Graz University of Technology, Austria. 
This research was funded by the Austrian Science Fund (FWF)
Grant-DOI: 10.55776/P33568.
This publication is also based upon work from COST Action CA 18232 MAT-DYN-NET, supported by COST (European Cooperation in Science and Technology), www.cost.eu. 




\begin{thebibliography}{33}

\bibitem{B24} M.\ Baur, {\it Weak coupling asymptotics for the Pauli operator in two dimensions}, arXiv:2409.17787 

\bibitem{BGM19} J.\ Behrndt, F.\ Gesztesy, and M.\ Mitrea, {\it Sharp boundary trace theory and Schr\"odinger operators on bounded Lipschitz domains}, Mem. Amer. Math. Soc., to appear.

\bibitem{BHS20} J.\ Behrndt, S.~Hassi, and H.S.V.\ de\ Snoo, {\it Boundary Value Problems, Weyl Functions, and Differential Operators}, Monographs in Mathematics, Vol.~108, Birkh\"auser, Springer, 2020.

\bibitem{BK13} G.\ Berkolaiko and P.\ Kuchment, {\it Introduction to Quantum Graphs}, Math. Surveys Monogr. {\bf 186}, Amer. Math. Soc., Providence, RI, 2013.

\bibitem{Bh12} P.\ K.\ Bhattacharyya, {\it Distributions. Generalized Functions with Applications in Sobolev Spaces}, de Gruyter Textbook, Berlin, 2012. 

\bibitem{BGS77} R.\ Blankenbecler, M.\ L.\ Goldberger, and B.\ Simon, {\it The bound states of weakly coupled long-range one-dimensional quantum Hamiltonians}, Ann. Phys. {\bf 108} (1977), 69--78.    

\bibitem{BGRS97} W.\ Bulla, F.\ Gesztesy, W.\ Renger, and B.\ Simon, {\it Weakly coupled bound states in quantum waveguides}, Proc. Amer. Math. Soc. {\bf 125} (1997), 1487--1495.

\bibitem{CM21} J.-C.\ Cuenin and K.\ Merz, {\it Weak coupling limit for Schr\"odinger-type operators with degenerate kinetic energy for a large class of potentials}, Lett. Math. Phys. {\bf 111} (2021), Paper No. 46, 29 pp.

\bibitem{CS18} J.-C.\ Cuenin and P.\ Siegl, {\it Eigenvalues of one-dimensional non-self-adjoint Dirac operators and applications}, Lett. Math. Phys. {\bf 108} (2018), 1757--1778. 

\bibitem{EKL18} P.\ Exner, S.\ Kondej, and V.\ Lotoreichik, {\it Asymptotics of the bound state induced by $\delta$-interaction supported on a weakly deformed plane}, J. Math. Phys. {\bf 59} (2018), 013501, 17 pp.  
  
\bibitem{EKL24} P.\ Exner, S.\ Kondej, and V.\ Lotoreichik, {\it Bound states of weakly deformed soft waveguides}, Asymptot. Anal. {\bf 138} (2024), 151--174.
  
\bibitem{FK98} S.\ Fassari and M.\ Klaus, {\it Coupling constant thresholds of perturbed periodic Hamiltonians}, J. Math. Phys. {\bf 39} (1998), 4369--4416.

\bibitem{FMV11} R.\ L.\ Frank, S.\ Morozov, and S.\ Vugalter, {\it Weakly coupled bound states of Pauli operators}, Calc. Var. Partial Diff. Eq. {\bf 40} (2011), 253--271.

\bibitem{FGKLNS21} G.\ Fucci, F.\ Gesztesy, K.\ Kirsten, L.\ L.\ Littlejohn, R.\ Nichols, and J.\ Stanfill, 
{\it The Krein--von  Neumann extension revisited}, Applicable Anal. {\bf 2021}, 25p., \\ DOI: 10.1080/00036811.2021.1938005.

\bibitem{GH87} F.\ Gesztesy and H.\ Holden, {\it A unified approach to eigenvalues and resonances of Schr\"odinger operators using Fredholm determinants}, J. Math. Anal. Appl. {\bf 123} (1987), 181--198; addendum {\bf 132} (1988), 309. 

\bibitem{GLMZ05} F.\ Gesztesy, Y.\ Latushkin, M.\ Mitrea, and M.\ Zinchenko, {\it Nonself-adjoint operators, infinite determinants, and some applications}, Russian J. Math. Phys. {\bf 12} (2005), 443--471; erratum {\bf 27}  (2020), 410.

\bibitem{GMMN08} F.\ Gesztesy, M.\ Malamud, M.\ Mitrea, and S.\ Naboko, {\it Generalized polar decompositions for closed operators in Hilbert spaces and some applications}, Integral Eq. Operator Th. {\bf 64} (2009), 83--113.

\bibitem{GM08} 
F.\ Gesztesy and M.\ Mitrea, {\it Generalized Robin boundary conditions, Robin-to-Dirichlet maps, and Krein-type  resolvent formulas for Schr\"odinger operators on bounded Lipschitz domains}, in {\it  Perspectives in Partial Differential Equations, Harmonic Analysis and Applications: A Volume in Honor of Vladimir G. Maz'ya's 70th Birthday}, D.\ Mitrea and M.\ Mitrea (eds.), Proceedings of Symposia in Pure Mathematics, Vol.~79, 
Amer. Math. Soc., Providence, RI, 2008, pp.~105--173. 

\bibitem{GM11} F.\ Gesztesy and M.\ Mitrea, {\it A description of all self-adjoint extensions of 
the Laplacian and Krein-type  resolvent formulas on non-smooth domains}, J. Analyse Math.
{\bf 113} (2011), 53--172.

\bibitem{HHRV23}
V.\ Hoang, D.\ Hundertmark, J.\ Richter, and S.\ Vugalter, {\it Quantitative bounds versus existence of weakly coupled bound states for Schr\"odinger type operators}, Ann. Henri Poincar\'e {\bf 24} (2023), 783--842.

\bibitem{Ho85} H.\ Holden, {\it On coupling constant thresholds in two dimensions}, J. Operator Th. {\bf 14} (1985), 263--276. 

\bibitem{JK81} D.\ Jerison and C.\ Kenig, {\it The Neumann problem in Lipschitz domains}, 
Bull. Amer. Math. Soc. (N.S.) {\bf 4} (1981), 203--207.
%
\bibitem{JK95} D.\ Jerison and C.\ Kenig, {\it The inhomogeneous Dirichlet problem in 
Lipschitz domains}, J. Funct. Anal. {\bf 130} (1995), 161--219.

\bibitem{Ka80} T.\ Kato, {\it Perturbation Theory for Linear Operators}, corr. printing of the 2nd ed., Springer, Berlin, 1980.

\bibitem{Kl77} M.\ Klaus, {\it On the bound state of Schr\"odinger operators in one dimension}, Ann. Phys. {\bf 108} (1977), 288--300.

\bibitem{Kl79} M.\ Klaus, {\it A remark about weakly coupled Schr\"odinger operators}, Helv. Phys. Acta {\bf 52} (1979), 223--229. 

\bibitem{Kl82} M.\ Klaus, {\it Some applications of the Birman--Schwinger principle}, Helv. Phys. Acta {\bf 55} (1982), 49--68. 

\bibitem{KS80} M.\ Klaus and B.\ Simon, {\it Coupling constant thresholds in nonrelativistic quantum mechanics. I. Short-range two-body case}, Ann. Phys. {\bf 130} (1980), 251--281. 

\bibitem{KS80a} M.\ Klaus and B.\ Simon, {\it Coupling constant thresholds in nonrelativistic quantum
mechanics II. Two cluster thresholds in $N$-body systems}, Commun. Math. Phys. {\bf 78} (1980), 153--168. 

\bibitem{KL14} S.\ Kondej and V.\ Lotoreichik, {\it Weakly coupled bound state of 2-D Schr\"odinger operator with potential-measure}, J. Math. Anal. Appl. {\bf 420} (2014), 1416--1438.

\bibitem{KK66} R.\ Konno and S.\ T.\ Kuroda,  {\it On the finiteness of perturbed eigenvalues}, J. Fac. Sci., Univ. Tokyo, Sec. I,  {\bf 13} (1966), 55--63.

\bibitem{K24} P.\ Kurasov, {\it Spectral Geometry of Graphs}, Oper. Theory Adv. Appl., Vol.\ 293, 
Birkh\"auser/Springer, Berlin, 2024.

\bibitem{La80} S.\ N.\ Lakaev, {\it Discrete spectrum and resonances of a one-dimensional Schr\"odinger operator for small values of the coupling constants}, Theoret. Math. Phys. {\bf 44} (1980), 810--814.

\bibitem{Me02} M.\ Melgaard, {\it On bound states for systems of weakly coupled Schr\"odinger equations in one space dimension}, J. Math. Phys. {\bf 43} (2002), 5365--5385.

\bibitem{MV23} S.\ Molchanov and B.\ Vainberg, {\it Negative eigenvalues of non-local Schr\"odinger operators with sign-changing potentials}, Proc. Amer. Math. Soc. {\bf 151} (2023), 4757--4770.

\bibitem{Pa80} S.\ H.\ Patil, {\it $T$-matrix analysis of one-dimensional weakly coupled bound states}, Phys. Rev. A {\bf 22}(1980), 1655--1663.  

\bibitem{Pa82} S.\ H.\ Patil, {\it Wave functions for weakly-coupled bound states}, Phys. Rev. A {\bf 25}(1982), 2467--2472.

\bibitem{Ra80} J.\ Rauch, {\it Perturbation theory for eigenvalues and resonances of Schr\"odinger Hamiltonians}, J. Funct. Anal. {\bf 35} (1980), 304--315. 

\bibitem{RS78} M.\ Reed and B.\ Simon, {\it Methods of Modern Mathematical Physics. IV: Analysis of Operators},  Academic Press, New York, 1978.

\bibitem{Re69} F.\ Rellich, {\it Perturbation Theory of Eigenvalue Problems}, Notes on Mathematics and its Applications, Gordon and Breach, New York, 1969.  

\bibitem{Si76} B.\ Simon, {\it The bound states of weakly coupled Schr\"odinger operators in one and two dimensions}, Ann. Phys. {\bf 97} (1976), 279--288.

\bibitem{Si77} B.\ Simon, {\it On the absorption of eigenvalues by continuous spectrum in regular perturbation theory}, J. Funct. Anal. {\bf 25} (1977), 338--344. 

\end{thebibliography}
\end{document}